\setlist[enumerate,1]{label=(\roman*)}
\numberwithin{equation}{section}
\declaretheoremstyle[
  headfont=\normalfont\bfseries,
  bodyfont=\normalfont,
]{noital}
\declaretheorem[style=plain,numberwithin=section,name=Theorem]{theorem}
\declaretheorem[style=plain,sibling=theorem,name=Proposition]{proposition}
\declaretheorem[style=plain,sibling=theorem,name=Lemma]{lemma}
\declaretheorem[style=plain,sibling=theorem,name=Corollary]{corollary}
\newcommand{\indef}[1]{\emph{#1}}
\newcommand{\defined}{\mathrel{\coloneqq}}
\DeclarePairedDelimiter{\p}{\lparen}{\rparen}
\renewcommand{\leq}{\leqslant}
\renewcommand{\geq}{\geqslant}
\let\oldexists\exists
\let\exists\relax
\DeclareMathOperator{\exists}{\:\!\oldexists}
\let\oldforall\forall
\let\forall\relax
\DeclareMathOperator{\forall}{\:\!\oldforall}
\newcommand{\st}{\mathbin{\colon}}
\DeclarePairedDelimiter{\set}{\lbrace}{\rbrace}
\newcommand{\emptyset}{\varnothing}
\DeclarePairedDelimiter{\card}{\lvert}{\rvert}
\newcommand{\union}{\mathbin{\cup}}
\newcommand{\inter}{\mathbin{\cap}}
\newcommand{\from}{\colon}
\newcommand{\symdiff}{\mathbin{\Delta}}
\DeclareMathOperator{\embeds}{\hookrightarrow}
\DeclarePairedDelimiter{\floor}{\lfloor}{\rfloor}
\DeclarePairedDelimiter{\ceil}{\lceil}{\rceil}
\DeclareMathOperator{\loglog}{log\,log}
\DeclareMathOperator{\logloglog}{log\,log\,log}
\newcommand{\divides}{\mathrel{\mid}}
\newcommand{\mod}[1]{\ (\mathrm{mod}\ #1)}
\DeclareMathDelimiter{\given}
  {\mathbin}{symbols}{"6A}{largesymbols}{"0C}
\DeclareMathOperator{\Prob}{\mathbb{P}}
\DeclarePairedDelimiterXPP{\prob}[1]
  {\Prob}{\lparen}{\rparen}{}
  {\renewcommand{\given}{\nonscript\;\delimsize\vert\nonscript\;\mathopen{}}#1}
\DeclareMathOperator{\Expec}{\mathbb{E}}
\DeclarePairedDelimiterXPP{\expec}[1]
  {\Expec}{\lparen}{\rparen}{}
  {\renewcommand{\given}{\nonscript\;\delimsize\vert\nonscript\;\mathopen{}}#1}
\DeclareMathOperator{\Var}{Var}
\DeclarePairedDelimiterXPP{\var}[1]
  {\Var}{\lparen}{\rparen}{}
  {\renewcommand{\given}{\nonscript\;\delimsize\vert\nonscript\;\mathopen{}}#1}
\DeclareMathOperator{\Cov}{Cov}
\DeclarePairedDelimiterXPP{\cov}[2]
  {\Cov}{\lparen}{\rparen}{}{#1,#2}
\newcommand{\eps}{\varepsilon}
\newcommand{\FF}{\mathbb{F}}
\newcommand{\NN}{\mathbb{N}}
\newcommand{\RR}{\mathbb{R}}
\newcommand{\cA}{\mathcal{A}}
\newcommand{\cD}{\mathcal{D}}
\newcommand{\cF}{\mathcal{F}}
\newcommand{\cQ}{\mathcal{Q}}
\newcommand{\cL}{\mathcal{L}}
\newcommand{\cM}{\mathcal{M}}
\newcommand{\cP}{\mathcal{P}}
\newcommand{\cS}{\mathcal{S}}
\begin{document}

\title{Improved bounds for the dimension of divisibility}

\author{Victor Souza and Leo Versteegen}

\address{Department of Pure Mathematics and Mathematical Statistics (DPMMS), University of Cambridge, Wilberforce Road, Cambridge, CB3 0WA, United Kingdom}
\email{\{vss28,lvv23\}@cam.ac.uk}

\begin{abstract}
The dimension of a partially-ordered set $P$ is the smallest integer $d$ such that one can embed $P$ into a product of $d$ linear orders.
We prove that the dimension of the divisibility order on the interval $\{1, \dotsc, n\}$ is bounded above by $C(\log n)^2 (\log \log n)^{-2} \log \log \log n$ as $n$ goes to infinity.
This improves a recent result by Lewis and the first author, who showed an upper bound of $C(\log n)^2 (\log \log n)^{-1}$ and a lower bound of $c(\log n)^2 (\log \log n)^{-2}$, asymptotically.
To obtain these bounds, we provide a refinement of a bound of Füredi and Kahn and exploit a connection between the dimension of the divisibility order and the maximum size of $r$-cover-free families.
\end{abstract}

\maketitle


\section{Introduction}

The \indef{dimension} $\dim P$, of a partially ordered set (poset) $P$ is the smallest integer $d$ such that $P$ can be embedded into a product of $d$ linear orders.
This was introduced by Dushnik and Miller~\cite{Dushnik1950-ma} in 1941 and led to the development of the rich dimension theory of orders.

Even though this concept is not new, the dimension of the divisibility order was studied only recently.
For a set $S \subseteq \NN$, denote by $\cD_{S}$ the divisibility order restricted to the set $S$, that is, the partial order $(S,\leq_S)$ with ground set $S$ and where $a \leq_S b$ when $a$ divides $b$.
Naturally, we are mainly interested in the dimension of $\cD_{[n]}$, where $[n] \defined \set{1, \dotsc, n}$.
Recently, the following bounds were obtained by Lewis and Souza~\cite{Lewis2021-em}.
Here and elsewhere, we denote by $\log$ the natural logarithm.

\begin{theorem}
\label{thm:lewissouza}
Let $\cD_{[n]}$ be the divisibility order on the $[n]$.
Then, as $n \to \infty$
\begin{align}
\label{eq:dim-original}
    \p[\big]{1/16 - o(1)} \frac{(\log n)^2}{(\loglog n)^2}
    \leq \dim \cD_{[n]}
    \leq \p[\big]{4 + o(1)} \frac{(\log n)^2}{\loglog n}.
\end{align}
\end{theorem}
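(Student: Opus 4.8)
I would set up both bounds through the classical \emph{realiser / critical pair} reformulation of dimension: $\dim P$ is the least $d$ such that some family of $d$ linear extensions of $P$ has intersection $P$, equivalently the least $d$ such that $d$ linear extensions suffice to reverse every critical pair of $P$. A short computation pins down the critical pairs of $\cD_{[n]}$ exactly. If $(a,b)$ is critical then every proper divisor of $a$ divides $b$; since $\operatorname{lcm}(a/p,a/q)=a$ for distinct primes $p,q\mid a$, the integer $a$ must be a prime power $a=p^{k}$, and then one checks that $v_{p}(b)=k-1$ and that $b$ must be large (no prime $q\neq p$ has $qb\le n$). Thus reversing critical pairs amounts to: for every prime power $p^{k}\le n$ and every such large $b$ with $v_{p}(b)=k-1$, produce a linear extension in which $b$ precedes $p^{k}$. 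Note that $b<_{L}p^{k}$ forces every divisor of $b$, in particular every prime $q\mid b$, to come before $p^{k}$ in $L$.

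For the \emph{upper bound} I would decompose the primes by size. Fix a threshold $y$ of size roughly $(\log n)^{2}$, write each $m\le n$ as a product of its $y$-smooth part and its $y$-rough part, and use that $\cD_{[n]}$ order-embeds into the product of the two corresponding divisibility posets, so that $\dim\cD_{[n]}\le\pi(y)+D_{\mathrm{ro}}$, where $D_{\mathrm{ro}}$ is the dimension of divisibility on the $y$-rough numbers in $[n]$; the smooth factor embeds coordinatewise into a box in $\NN^{\pi(y)}$, contributing at most $\pi(y)\sim y/\log y$. A $y$-rough number $\le n$ has at most $r\defined\log n/\log y$ prime factors with multiplicity, and since every $m\le n$ has at most $(1+o(1))\log n/\loglog n$ \emph{distinct} prime factors (primorial bound), I would bound $D_{\mathrm{ro}}$ by building linear extensions that place carefully chosen blocks of rough primes as late as possible; the combinatorial core is to choose the blocks so that for every rough prime $p$ and every admissible set $T$ of rough primes with $p\notin T$, some extension orders all of $T$ before $p$ --- a covering/``suitability'' condition whose minimal solution governs the estimate. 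Iterating the smooth/rough split over a geometric sequence of thresholds, so that successive blocks contribute rapidly decreasing amounts, and optimising constants should yield the stated bound with leading constant $4$.

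For the \emph{lower bound}, the target $\tfrac1{16}(\log n)^{2}(\loglog n)^{-2}$ is the square of $\log n/\loglog n$, and this latter quantity is already the largest $d$ for which the standard example $S_{d}$ embeds in $\cD_{[n]}$ (take $a_{i}=q_{i}$ and $b_{i}=\prod_{j\neq i}q_{j}$ for the first $d$ primes, with $\prod_{j<d}q_{j}\le n$). Since a standard example cannot reach the square, I would instead exhibit a subposet of $\cD_{[n]}$ --- built from products of small primes and small prime powers, with roughly $(\log n/\loglog n)^{2}$ primes in play, each of polylogarithmic size so that the relevant products stay $\le n$ --- engineered so that any realiser, once restricted to the ground primes, must encode a family of bounded-size sets with the separation property of an $r$-cover-free family; the known lower bound on the universe size of such a family (of the shape $\asymp r^{2}\log N/\log r$ for a family of $N$ sets, no member covered by $r$ others) then forces $\asymp(\log n/\loglog n)^{2}$ linear extensions. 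The constant $1/16$ should come out of balancing how many primes of bounded size are affordable against how much separation the gadget demands.

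The step I expect to be the main obstacle is everything beyond the trivial $\NN^{\pi(y)}$ embedding and the trivial standard example. On the upper side, one must turn the ``late block'' idea into genuine linear extensions of $\cD_{[n]}$ and prove that the covering problem has a solution of size only $(\log n)^{2}/\loglog n$ --- a naive probabilistic construction loses an extra factor of order $\log n/\loglog n$, so the blocking must be arranged with care. On the lower side, one must design the prime-power gadget and the reduction to cover-free families so that the dimension provably reaches the full square rather than merely $\log n/\loglog n$. This precise equivalence between the dimension of divisibility and the size of cover-free families is where the heavy lifting lies, and (as the abstract indicates) is the hinge on which the improved bounds of this paper will turn.
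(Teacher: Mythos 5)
First, a point of orientation: the paper does not prove \Cref{thm:lewissouza} at all --- it is quoted from Lewis and Souza~\cite{Lewis2021-em} --- so the only parts of your proposal that can be checked against the paper are those overlapping with the machinery of Sections 3--5 (which reprove and sharpen the upper bound in \Cref{cor:decomposition}, \Cref{prop:downset}, \Cref{prop:prime-interval} and the corollary giving $(1+o(1))(\log n)^2/\loglog n$). Your critical-pair analysis (reducing to pairs $(p^k, b)$ with $v_p(b)=k-1$) is correct and is essentially equivalent to the paper's ``suitable permutations'' formulation, and your smooth/rough decomposition with threshold $y\approx(\log n)^2$ plus $\pi(y)$ chains for the smooth part is exactly the paper's decomposition via \Cref{cor:decomposition}. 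But for the upper bound the two decisive steps are precisely the ones you leave open: (a) a permutation of the primes placing all prime divisors of $b$ before $p$ is only a necessary condition --- one must \emph{lift} a suitable set of permutations to genuine linear extensions of the full poset, multiplicities included, which is what the colexicographic construction in \Cref{prop:downset} does; and (b) one must show the covering problem is solvable with $O((\log n)^2/\log y)$ permutations. Your stated worry that ``a naive probabilistic construction loses an extra factor of order $\log n/\loglog n$'' is resolved, not by careful blocking, but by the reduction to squarefree supports (\Cref{cor:squarefree}): a squarefree $y$-rough $m\le n$ has fewer than $\log n/\log y$ distinct prime factors, so a uniformly random permutation covers a fixed pair with probability at least $\log y/\log n$, and a union bound over at most $n\cdot\pi(n)$ pairs gives \Cref{prop:prime-interval} with no loss. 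Without (a) and (b) the upper bound is a plan, not a proof.

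The lower bound part is a genuine gap. The cover-free connection you invoke runs in the wrong direction: in this paper (\Cref{prop:dimension-cover}) an $r$-cover-free family on a \emph{small} universe $[d]$ yields an embedding $\cD_{[n],(a,b]}\embeds\cM_{[d]}$ and hence an \emph{upper} bound $\dim\le d$. There is no converse stated or known here that converts a small realiser of $\cD_{[n]}$ into a small-universe cover-free family, so the Dyachkov--Rykov-type lower bound $\Omega(r^2\log N/\log r)$ on universe sizes cannot be applied as you suggest without a substantial new argument. The actual Lewis--Souza lower bound is obtained by exhibiting an explicit suborder of the hypercube (built from squarefree products of roughly $\log n/\loglog n$ small primes) inside $\cD_{[n]}$ and lower-bounding the dimension of that suborder directly; your sketch names the target but supplies neither the gadget nor the reduction, and as written the proposed reduction would not go through.
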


Our main result is to improve the upper bound in \Cref{thm:lewissouza}.
Our new bound determines $\dim \cD_{[n]}$ asymptotically up to a multiplicative $\logloglog n$ factor.

\begin{theorem}
\label{thm:main}
Let $\cD_{[n]}$ be the divisibility order on $[n]$.
Then, as $n \to \infty$,
\begin{equation*}
  \dim \cD_{[n]}
  \leq \p[\big]{4 / \log 2 + o(1)} \frac{(\log n)^2 \logloglog n}{(\loglog n)^2}.
\end{equation*}
\end{theorem}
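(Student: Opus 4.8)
The plan is to exploit the connection between the dimension of the divisibility order and cover-free families, which the abstract advertises. Recall that a family $\cF$ of subsets of a ground set is \emph{$r$-cover-free} if no member is contained in the union of $r$ others; write $N(k,r)$ for the maximum size of an $r$-cover-free family on a $k$-element ground set. The key point is that $\dim \cD_{[n]}$ is essentially governed by how efficiently one can encode the primes up to $n$ by subsets of a small ground set so that divisibility relations among squarefree numbers are faithfully recorded, and the relevant constraint on such an encoding is precisely an $r$-cover-free condition where $r$ is roughly the number of prime factors a number in $[n]$ can have, namely $r \approx \log n / \loglog n$. Thus I would first reduce, as in Lewis--Souza, to bounding the dimension of the divisibility order on squarefree numbers with at most $r$ prime factors, and then to a statement about realisers built from $r$-cover-free families.

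\textbf{The main estimate.}

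The heart of the argument is a sharp upper bound on $N(k,r)$, or rather its inverse: I want the smallest $k$ such that an $r$-cover-free family of size $\pi(n)$ (the number of primes up to $n$, which is $\sim n/\log n$, but what matters is $\log \pi(n) \sim \log n$) exists on a ground set of size $k$. The classical bounds of Erd\H{o}s--Frankl--F\"uredi and later refinements give that this minimal $k$ is of order $r^2 \log(\pi(n)) / \log r$, and here $r \asymp \log n/\loglog n$, so $\log r \asymp \logloglog n$ would appear in the \emph{denominator} — but we need it in the numerator, so I have the direction backwards; the correct reading is that the number of linear orders needed is controlled by $k \approx r^2 \log n / \log r$... let me recompute: with $r \sim \log n/\loglog n$ we get $r^2 \sim (\log n)^2/(\loglog n)^2$ and $\log r \sim \loglog n$, giving $k \sim (\log n)^2/(\loglog n)^3$, which is too small. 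So in fact the bound we are after, $(\log n)^2 \logloglog n/(\loglog n)^2$, suggests the relevant quantity is $r^2 \logloglog n / \text{(something)}$ — this tells me the reduction must incur an extra $\logloglog n$ loss somewhere, presumably from handling the prime \emph{multiplicities} (prime powers $p^a \le n$) rather than just the squarefree part. So the plan is: (i) split a generic $m \in [n]$ as $m = m_0 \cdot m_1$ where $m_0$ is the product of primes appearing to the first power and $m_1$ carries the higher prime powers; handle $m_1$ with $O((\log n)^2/(\loglog n)^2)$ linear orders by a direct counting argument since $m_1$ has few distinct prime factors; (ii) for $m_0$, use an optimal $r$-cover-free family with $r = (1+o(1))\log n/\loglog n$ on a ground set of size $k = (4/\log 2 + o(1)) (\log n)^2 \logloglog n/(\loglog n)^2$, where the $\logloglog n$ and the constant $4/\log 2$ come from the precise Ruszinkó / Füredi-type bound $k \ge (1+o(1)) \frac{r^2}{4\log r}\log |\cF|$ read in the efficient direction together with the logarithmic-in-$r$ correction.

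\textbf{From cover-free families to a realiser.}

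Given the cover-free family $\cF = \{S_p : p \le n \text{ prime}\}$ on ground set $[k]$, I would construct $O(k)$ linear extensions of $\cD_{[n]}$ as follows: for each coordinate $i \in [k]$, define a partial comparison that orders a squarefree $m_0 = \prod_{p} p$ by the indicator of whether $i \in S_p$ for some prime $p \mid m_0$, refined lexicographically; the $r$-cover-free property guarantees that for any non-divisibility $a \nmid b$ witnessed by a prime $p \mid a$, $p \nmid b$, there is a coordinate $i \in S_p \setminus \bigcup_{q \mid b} S_q$ (this uses that $b$ has at most $r$ prime factors), and this coordinate provides a linear extension reversing the pair. Combining these $k$ extensions for the $m_0$ part with the $O((\log n)^2/(\loglog n)^2)$ extensions for the $m_1$ part, and with a constant number of further extensions to handle $1$ and to synchronise, yields a realiser of size $(4/\log 2 + o(1))(\log n)^2\logloglog n/(\loglog n)^2$, as desired. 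I expect the \textbf{main obstacle} to be the bookkeeping in step (i)--(ii): one must optimise the split point between $m_0$ and $m_1$ and the choice of $r$ simultaneously so that the dominant term has exactly the constant $4/\log 2$, and verify that the cover-free construction achieving the extremal bound (a random construction or a Reed--Solomon / concatenation construction) can be taken with the precise second-order term $\logloglog n$ rather than a worse power of $\loglog n$; getting the constant sharp will require the tight form of the cover-free bound rather than the soft version.
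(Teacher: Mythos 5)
Your proposal correctly identifies the cover-free-family connection, but the quantitative core of your argument is misdirected in two ways that would make it fail. First, the extra $\logloglog n$ factor does \emph{not} come from handling prime multiplicities: the reduction to squarefree numbers (\Cref{cor:support}, \Cref{cor:squarefree}) shows that multiplicities are entirely free, so your split $m = m_0 m_1$ buys nothing and cannot be the source of the loss. The actual source is that the known \emph{constructions} of $r$-cover-free families with $r \asymp \sqrt{k}$ (the Erd\H{o}s--Frankl--F\"uredi polynomial-graph construction, \Cref{thm:cover-bound}) are too weak to cover all primes in the critical range $\bigl((\log n)^2/\loglog n,\ \exp((\loglog n)^2)\bigr]$ with a single family; one is forced to partition this range into $K \sim \logloglog n/\log 2$ intervals of the form $(d^{2^{k-1}}, d^{2^k}]$ with $d \approx 2\log n/\loglog n$, apply a separate degree-$2^k$ polynomial family to each (\Cref{cor:dimension-cover,cor:dimension-boost}), and pay $\approx 4(\log n/\loglog n)^2$ per interval. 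The constant $4/\log 2$ is exactly (number of intervals) $\times$ (cost $d^2$ per interval); your proposal has no such decomposition.

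Second, and more seriously, you invoke a Ruszink\'o/F\"uredi-type inequality $k \geq (1+o(1))\frac{r^2}{4\log r}\log|\cF|$ ``read in the efficient direction.'' That inequality is a \emph{lower} bound on the ground set size, i.e.\ an impossibility result; it does not supply a family on a ground set of that size, and no known construction matches it in the regime $r \asymp \sqrt{k}$. Indeed your own back-of-envelope computation with it yields $k \sim (\log n)^2/(\loglog n)^3$, which would contradict the known lower bound $\dim\cD_{[n]} \geq (1/16 - o(1))(\log n)^2/(\loglog n)^2$ from \Cref{thm:lewissouza}, so no realiser of that size can exist. The step from ``such a family would have to live on a ground set of size at least $k$'' to ``there is a realiser of size $k$'' is the missing idea, and it cannot be repaired without the interval decomposition and the explicit polynomial construction. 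Your sketch of how a cover-free family yields an embedding (a coordinate for each ground-set element, with the cover-free property witnessing non-divisibility) is essentially \Cref{prop:dimension-cover} and is fine; the gap is entirely in producing the families and in accounting for where the $\logloglog n$ and the constant actually arise.
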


Our main innovation is a refinement of a bound of Füredi an Kahn~\cite{Furedi1986-bu} on the dimension of suborders of the hypercube, see \Cref{thm:kahn-furedi-refined} below.
We also profit from a connection between the dimension of certain suborders of the hypercube and the size of families of subsets, no one contained in the union of $r$ others.

This paper is organised as follows.
In \Cref{sec:poset} we recall some standard definitions in order theory.
In \Cref{sec:cube} we review the theory of the dimension of suborders of the hypercube and provide our refinement over Füredi and Kahn's bound.
In \Cref{sec:cover} we discuss $r$-cover-free families and state which bounds we will use.
In \Cref{sec:downsets} we give a combinatorial argument that implies that we can restrict to the squarefree numbers to compute $\dim \cD_{[n]}$.
In \Cref{sec:divisibility} we export these tools to the divisibility order: these ideas alone are enough to improve the constant in the upper bound in~(\ref{eq:dim-original}).
Finally, we prove \Cref{thm:main} in \Cref{sec:main} and present some final remarks in \Cref{sec:remarks}.


\section{Dimension of partial orders}
\label{sec:poset}

A \indef{poset} is an ordered pair $P = (S, \leq_P)$, where $S$ is a set and $\leq_P$ is a partial order on $S$.
We usually identify a poset with its ground set $S$.
Two elements $a, b \in S$ are \indef{incomparable} if neither $a \leq_P b$ nor $b \leq_P a$ holds.
A \indef{linear order} is a poset in which the elements are pairwise comparable.
Given two posets $P = (S, \leq_P)$, $Q = (S', \leq_Q)$, a \indef{poset embedding} of $P$ into $Q$ is a map $\varphi \from S \to S'$ such that $\varphi(a) \leq_Q \varphi(b)$ if and only if $a \leq_P b$.
We write $P\embeds Q$ to represent either an embedding of $P$ into $Q$ or the existence of such an embedding, depending on context.
A poset $Q = (T, \leq_Q)$ is called a \indef{suborder} of $P = (S, \leq_P)$ if $T \subseteq S$ and the inclusion map is a poset embedding.
A \indef{chain} in a poset is a suborder that is a linear order, and an \indef{antichain} is a suborder in which no two elements are comparable.

Given posets $P_i = (S_i, \leq_{P_i})$, $i \in [k]$, the \indef{product poset} $P = P_1 \times \dotsb \times P_k$ is the order on the product set $S = S_1 \times \dotsb \times S_k$ where $a \leq_P b$ if and only if $a_i \leq_{P_i} b_i$ for all $i \in [k]$.

Consider $\RR$ with its standard order.
The \indef{dimension} of a countable poset $P$, denoted $\dim P$, is equal to the minimum $d$ such that $P \embeds \RR^d$ with the product order.
It follows from this definition that dimension is subadditive and monotone;
i.e., for any two posets $P$ and $Q$, $\dim P \times Q \leq \dim P + \dim Q$, and, if $P \embeds Q$, then $\dim P \leq \dim Q$.

An equivalent definition of dimension can be given in terms of linear extensions.
Given a poset $P = (S, \leq_P)$, a \indef{linear extension} of $P$ is a linear order $L = (S,\leq_L)$ that extends $P$, that is, if $a \leq_P b$, then $a \leq_L b$.
For any poset $P$, a \indef{realiser} of $P$ is a set $\cL$ of linear extensions of $P$ with the property that, for every pair $(a,b) \in P^2$ with $a \not\geq b$, there exists an $L \in \cL$ such that $a \leq_L b$.
Then the dimension of $P$ is the minimum cardinality of a realiser of $P$.

Given a poset $P = (S,\leq_P)$ and a subset $U \subseteq S$, we say tha the element $a \in U$ is \indef{minimal} (or \indef{maximal}) in $U$ if there is no $b \in U$ such that $b \leq_P a$ (or $a \leq_P b)$.
An element $a \in U$ is said to be a \indef{minimum} (or a \indef{maximum}) in $U$ if for every $b \in U$, we have $a \leq_P b$ (or $b \leq_P a$).
Note that if $x$ is a minimum element of $P$, then $x$ has to be the minimum element in each linear extension of $P$.
Therefore, removing $x$ from $P$ does not change the dimension, unless $\card{P} = 1$.


\section{Dimension of suborders of the cube}
\label{sec:cube}

Before considering the divisibility order, it will be useful to review the classical problem of determining the dimension of subsets of the hypercube $\cQ_{[n]}$.
Indeed, what follows is a brief summary of the survey of Kierstead~\cite{Kierstead1999-vj}. 
We will be particularly concerned with the order obtained by restricting $\cQ_{[n]}$ to the subsets of cardinalities $1$ and $k$.
Denote this poset by $\cQ_{[n]}^{1,k}$.
More generally, if $S \subseteq [n]$, we denote by $\cQ_{[n]}^S$ the restriction of $\cQ_{[n]}$ to subsets with cardinalities in $S$.
An useful observation is that $\dim \cQ_{[n]}^{1,k} = \dim \cQ_{[n]}^{[0,k]}$.
See for instance \cite{Lewis2021-em}*{Lemma 3.1} for a proof.

While $\dim \cQ_{[n]} \leq n$ follows trivially from the definition of dimension, equality holds.
In fact, Dushnik and Miller~\cite{Dushnik1941-ud} showed that $\dim \cQ_{[n]}^{1,n-1} = n$.
This was later extended by Dushnik~\cite{Dushnik1950-ma}, fully determining $\dim \cQ_{[n]}^{1,k}$ in the range $k \geq 2\sqrt{n}$.
Indeed, he showed that for $1 \leq r \leq \sqrt{n}$, we have $\dim \cQ_{[n]}^{1,k} \leq n - r$ if and only if $k \leq n/r + r - 3$.
In particular, if $k \geq 2\sqrt{n}$ then
\begin{equation}
\label{eq:dushnik}
    \dim \cQ_{[n]}^{1,k} \geq n - 2\sqrt{n}.
\end{equation}

Further results are asymptotic, and the known behaviour of $\dim \cQ_{[n]}^{1,k}$ can be divided the ranges between roughly $\log\log n$, $\exp(\sqrt{\log n})$ and $\sqrt{n}$.
When $k$ is quite small, that is, for $2 \leq k < \log_2 \log_2 n - \log_2 \log_2 \log_2 n$, we have
\begin{equation*}
    2^{k-2} \log_2 \log_2 n \leq \dim \cQ_{[n]}^{1,k} \leq k 2^k \log_2 \log_2 n.
\end{equation*}
The upper bound, which actually holds for all $k$, is due to Spencer~\cite{Spencer1971-fs}, while the lower bound is by Kierstead~\cite{Kierstead1996-kc}.

For the range $\log_2 \log_2 n < k < 2 \sqrt{n} - 2$, Kierstead~\cite{Kierstead1996-kc} showed that
\begin{equation}
\label{eq:kierstead-lower}
    \frac{k^2 \log n}{16 \log k} \leq \dim \cQ_{[n]}^{1,k}.
\end{equation}
In the same range, the following bounds of Füredi and Kahn~\cite{Furedi1986-bu}:
\begin{equation}
\label{eq:furedi-kahn}
    \dim \cQ_{[n]}^{1,k} \leq (k + 1)^2 \log n;
\end{equation}
and of Kierstead~\cite{Kierstead1996-kc}:
\begin{equation}
\label{eq:kierstead}
    \dim \cQ_{[n]}^{1,k} \leq 2 \p[\bigg]{\frac{k \log n}{\log k}}^2,
\end{equation}
are known.
While both \eqref{eq:furedi-kahn} and \eqref{eq:kierstead} holds for all $2 \leq k \leq n$, the first is better when $k < \exp(\sqrt{\log n})$.
Our application requires the following refinement of the Füredi-Kahn bound, which may be of independent interest.

\begin{theorem}
\label{thm:kahn-furedi-refined}
If $\cP$ is a suborder of $\cQ_{[n]}^{[1,k]}$ with $m$ elements then
\begin{equation}
\label{eq:kahn-furedi-refined}
    \dim \cP \leq (k+1) \ceil{\log n + \log m}.
\end{equation}
\end{theorem}
\begin{proof}
Consider a set $\cS$ of $d$ permutations of $[n]$, each chosen independently and uniformly at random.
For each permutation $\sigma \in \cS$, we associate a linear extension $L_\sigma$ of $\cQ_{[n]}^{[1,k]}$ as follows.
For two distinct elements $A,B \in \cQ_{[n]}^{[1,k]}$, let $x \in A \symdiff B$ be such that $\sigma(x)$ is maximal.
If $x \in B$, we set $A \leq_{L_\sigma} B$, otherwise set $B \leq_{L_\sigma} A$.
If $A \subseteq B$, then $A \symdiff B \subseteq B$, so $L_\sigma$ is indeed a linear extension.

Say that a pair $(x,A) \in [n] \times \cP$ with $x \notin A$ is \indef{covered} by $\cS$ if there is $\sigma \in \cS$ such that $A \leq_{L_\sigma} \set{x}$.
Suppose that $\cS$ covers every pair $(x,A) \in [n] \times \cP$ with $x \notin A$.
We claim that it follows that $\set{L_\sigma \st \sigma \in \cS}$ is a realiser for $\cP$.
Indeed, for incomparables $A, B \in \cP$, take $x \in B \setminus A$ and $\sigma \in \cS$ with $A \leq_{L_\sigma} \set{x}$.
Then by transitivity, we have $A \leq_{L_\sigma} B$, as $x \in B$.

Now we show that if $d = (k + 1)\ceil{\log n + \log m}$, with positive probability $\cS$ covers all pairs.
Indeed, fix a pair $(x,A) \in [n] \times \cP$ with $x \notin A$.
The probability that there is no $\sigma \in \cP$ with $A \leq_{L_\sigma} \set{x}$ is equal to
\begin{equation*}
    \p[\Big]{1 - \frac{1}{\card{A} + 1}}^d.
\end{equation*}
Consequently, by the union bound, the probability that some pair in $[n] \times \cP$ is not covered is at most
\begin{equation*}
    n m \p[\Big]{1 - \frac{1}{\card{A} + 1}}^d < n m \exp\p[\Big]{-\frac{d}{k+1}} = \exp\p[\Big]{\log n + \log m - \frac{d}{k+1}} \leq 1.
\end{equation*}
Thus, $\cS$ is realiser with positive probability.
This gives $\dim \cP \leq \card{\cS} = d$.
\end{proof}

From the proof, we can observe the same bound \eqref{eq:kahn-furedi-refined} holds if $m$ is the number of elements of $\cP$ that are not singletons.
As a corollary, we get Füredi-Kahn's result since $\cQ_{[n]}^{1,k}$ has $\binom{n}{k}$ elements that are not singletons and $\log \binom{n}{k} \leq k \log n$.

We will recover Kierstead's upper bound \eqref{eq:kierstead} in the next section with an improved constant, and discuss it's connection to $r$-cover-free families.


\section{Cover-free families}
\label{sec:cover}

Let $\cF$ be a family of subsets of $[n]$.
We say that $\cF$ is an \indef{$r$-cover-free} family if there are no distinct $F_0, F_1, \dotsc, F_r \in \cF$ with $F_0 \subseteq F_1 \union \dotsb \union F_r$.
Such families are relevant to us as they can be used to bound the dimension of some suborders of the hypercube.

\begin{proposition}
\label{prop:dimension-cover-cube}
If $\cF$ is an $k$-cover-free family consisting of $n$ subsets of $[d]$, then
\begin{equation*}
    \dim \cQ_{[n]}^{1,k} \leq d.
\end{equation*}
\end{proposition}
\begin{proof}
Write $\cF = \set{F_1, \dotsc, F_n}$.
Consider the map $\varphi$ from $\cQ_{[n]}^{1,k}$ to $\cQ_{[d]}$ defined by $\varphi(S) = \union_{i \in S} F_i$ for $S \subseteq [n]$, $\card{S} = 1, k$.
We claim that $\varphi$ is a poset embedding.
If $i \in S$ for some $i \in [d]$ and $S \subseteq [d]$, then clearly $\varphi(i) \subseteq \varphi(S)$, so $\varphi$ is monotone.
If $i \notin S$, then $F_i = \varphi(i) \not\subseteq \varphi(S) = \union_{j \in S} F_j$ as $\card{S} = k$ and $\cF$ is $k$-cover free.
Therefore, $\varphi$ is indeed an embedding.
As a consequence, we have $\dim \cQ_{[n]}^{1,k} \leq \dim \cQ_{[d]} = d$, as wanted.
\end{proof}

Denote by $f_r(n)$ be the maximum cardinality of an $r$-cover-free family of subsets of $[n]$.
This quantity has been studied in several ranges of $r$ and $n$.
A $1$-cover-free family is just an antichain, so Sperner's theorem~\cite{Sperner1928-hh} states that $f_1(n) = \binom{n}{\floor{n/2}}$.
For $r$ fixed and $n$ growing, Erdős, Frankl and Füredi~\cite{Erdos1985-js} showed that
\begin{equation}
\label{eq:bound-frn}
    \p[\Big]{1 + \frac{1}{4r^2}}^n \leq f_r(n) \leq e^{(1 + o(1))n/r}.
\end{equation}

For our applications, we will take $r$ to be roughly $n^{1/2}$.
In such range, the bounds in~(\ref{eq:bound-frn}) are very weak. Erdős, Frankl and Füredi also considered this range and proved the following result.

\begin{theorem}
\label{thm:cover-bound}
For $r = \eps n^{1/2}$ and $0 < \eps < 1/\sqrt{2}$, we have
\begin{align}
\label{eq:cover-bound}
    \p[\big]{1+o(1)}n^{(\floor{1/\eps} + 1)/2}
    \leq f_r(n)
    \leq n^{\ceil{2/\eps^2}}.
\end{align}
\end{theorem}

We will only make use of the lower bound, which Erdős, Frankl and Füredi prove through a beautiful construction, which we present here for the convenience of the reader.
Their lower bound also holds for the whole range of $r$, even though it is weaker than \eqref{eq:bound-frn} whenever $r = o(n^{1/2}/\log n)$.
The following form is more convenient for us.

\begin{proposition}
\label{prop:cover-bound}
As $n \to \infty$, we have
\begin{align*}
    f_r(n) \geq \exp\p[\bigg]{\p[\big]{1 - o(1)} \frac{n^{1/2} \log n}{2r} }.
\end{align*}
\end{proposition}
\begin{proof}
Let $h$ be an integer and $q$ be a power of a prime.
The ground set in which we construct our family is $X = \FF_q^2$, where $\FF_q$ is the field with $q$ elements.
Now consider
\begin{equation*}
  \cF = \set[\Big]{ \set[\big]{(x,g(x)) \st x \in \FF_q}
  \st g \in \FF_q[t],\, \deg g \leq h },
\end{equation*}
namely, the graphs of all polynomials with degree $\leq h$ in $\FF_q$.
This family has the property that each $F \in \cF$ has $\card{F} = q$ and for distinct $F, F' \in \cF$ we have $\card{F \inter F'} \leq h$, as if two polynomials of degree $\leq h$ have more than $h$ common points, they coincide.
Hence, if $F_0 \subseteq F_1 \union \dotsb \union F_r$ for distinct sets $F_i \in \cF$, we have $q \leq rh$.
So as long as $rh \leq q-1$, we cannot have such containment.
Therefore, $\cF$ is a $\floor{(q-1)/h}$-cover-free family with $q^{h+1}$ elements.

By the prime number theorem, there is a prime power $q \leq n^{1/2}$ with $ q = (1-o(1))n^{1/2}$.
Letting $h = \floor{(q-1)/r}$, we can construct an $r$-cover free family of size at least $q^{h+1} > \p[\big]{(1-o(1))n^{1/2}}^{(1-o(1))n^{1/2}/r} = n^{(1-o(1))n^{1/2}/2r}$ on a ground set of size $q^2 \leq n$.
\end{proof}

Using this construction, we also recover Kierstead's upper bound \eqref{eq:kierstead} with an improved constant.

\begin{corollary}
\label{cor:dimension-cover-cube}
As $n \to \infty$, we have
\begin{align*}
    \dim \cQ_{[n]}^{1,k} \leq \p[\big]{1 + o(1)}\p[\Big]{\frac{k \log n}{\log k}}^2.
\end{align*}
\end{corollary}
\begin{proof}
Let $\eps > 0$ and for $k < n \in \NN$, define
\begin{align*}
    d_{n,k} = \ceil[\Big]{\p[\Big]{(1 + \eps) 
 \frac{k \log n}{\log k}}^2}.
\end{align*}
If there is an $k$-cover-free family on $[d_{n,k}]$ of size at least $n$, then \Cref{prop:dimension-cover-cube} gives us that $\dim \cQ_{[n]}^{1,k} \leq d_{n,k}$.
Therefore, it is enough to show that $f_k(d_{n,k}) \geq n$.
By \Cref{prop:cover-bound}, we have
\begin{align*}
    \log f_{k}(d_{n,k}) 
    &\geq \p[\big]{1 - o(1)}\frac{d_{n,k}^{1/2} \log d_{n,k}}{2k}
    = \p[\big]{1 - o(1)}(1+\eps) \frac{ \log n \log d_{n,k}}{2\log k} \\
    &= \p[\big]{1 - o(1)}(1+\eps) \frac{ \log n}{\log k}\p[\Big]{\log k + \loglog n - \loglog k},
\end{align*}
which is more than $\log n$ if $n$ is sufficiently large.
Since $\eps > 0$ can be taken to be arbitrarily small, we are done.
\end{proof}


\section{Dimension of downsets of multisets}
\label{sec:downsets}

In this section we consider the combinatorial nature of suborders of the multiset order.
These considerations could be made specifically for the order in question, namely the divisibility order, but we believe that this would obfuscate the purely combinatorial nature of our argument.

Indeed, we provide generalisation of an argument in Lewis and Souza~\cite{Lewis2021-em}*{Lemma 3.1}, which is in part a generalisation of an argument by Dushnik~\cite{Dushnik1941-ud}, characterising the dimension of certain suborders of the hypercube in terms of suitable sets of permutations.

Given a set $X$, denote by $\cM_X$ the infinite poset of multisets with elements in $X$, ordered by containment with multiplicity.
For $A \in \cM_X$ and $x \in X$, denote by $\nu_x(A)$ the multiplicity of $x$ in $A$.
We denote by $\cQ_X$ the poset of \emph{subsets} of $X$ ordered by inclusion.
The \indef{support} of a multiset $A \in \cM_X$ is the set $s(A) \defined \set{x \in X \st \nu_x(A) > 0} \in \cQ_X$.
For a family $\cA \subseteq \cM_X$, the support of $\cA$ is the family $s(\cA) \defined \set{ s(A) \st A \in \cA} \subseteq \cQ_X$.
We then have $s(s(A)) = s(A)$ for a multiset $A \in \cM_X$, and $s(s(\cA)) = s(\cA)$ for a family of multisets $\cA \subseteq \cM_X$.

Denote by $X^{(k)}$ the family of subsets of $X$ with cardinality $k$.
We identify $X^{(1)}$ with $X$ by $\set{x} \mapsto x$.
A linear extension $L$ of $X^{(1)} \subseteq \cM_X$ can be identified with a permutation on $X$, that is, a bijection $\sigma \from X \to [n]$, with $n = \card{X}$.
Indeed, by setting $\sigma(x) = k$ when $x$ is the $k$-th element in the order $L$, we have $x \leq_L y$ if and only if $\sigma(x) \leq \sigma(y)$.

Fix $\cA \subseteq \cM_X$.
A set $\cS$ of permutations of $X$ is said to be \indef{$\cA$-suitable} if for every $A \in \cA$ and $x \in X$ with $x \notin A$, we have an element $\sigma \in \cS$ such that $\sigma(y) \leq \sigma(x)$ for every $y \in A$.
Denote by $M(\cA)$ the minimal cardinality of an $\cA$-suitable set of permutations.

Given a poset $P$, a \indef{downset} $F \subseteq P$ is a suborder that is \indef{down-closed}, namely, if $b \in F$ and $a \in P$ is such that $a \leq_P b$, then $a \in F$.
The following result allows us to compute the dimension of downsets in $\cM_X$ via sets of suitable permutations and, crucially, it shows that multiplicities do not interfere with the dimension of downsets.

\begin{proposition}
\label{prop:downset}
If $\cF \subseteq \cM_X$ is a downset, then
\begin{align*}
\dim \cF = M(s(\cF)).
\end{align*}
\end{proposition}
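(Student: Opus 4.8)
The plan is to establish the two inequalities $\dim \cF \le M(s(\cF))$ and $\dim \cF \ge M(s(\cF))$ separately, after recording a few elementary facts about the support operation. Identifying a subset $S \subseteq X$ with the multiset all of whose multiplicities are $0$ or $1$, one has $s(F) \subseteq F$ for every $F \in \cM_X$, so $s(\cF) \subseteq \cF$; and if $S \in s(\cF)$ with $S = s(F)$, then any $S' \subseteq S$ satisfies $S' \subseteq F \in \cF$, hence $S' \in \cF$ and $S' = s(S') \in s(\cF)$. Thus $s(\cF)$ is a downset of $\cQ_X$ contained in $\cF$. I would also note that one may assume $X = \bigcup_{F \in \cF} s(F)$: discarding the remaining elements of $X$ changes neither $\dim \cF$ (which is intrinsic to the poset) nor $M(s(\cF))$ (append the discarded elements to the end of each permutation in one direction, restrict the induced orders in the other). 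In particular $\{x\} \in s(\cF) \subseteq \cF$ for all $x \in X$.

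For $\dim \cF \le M(s(\cF))$, I would take a minimum-size $s(\cF)$-suitable family $\cS$ and, for each $\sigma \in \cS$, let $L_\sigma$ be the reverse-lexicographic order on $\cM_X$: $A <_{L_\sigma} B$ iff $\nu_z(A) < \nu_z(B)$ at the $\sigma$-largest $z$ with $\nu_z(A) \ne \nu_z(B)$. This is a linear order (distinct multisets have distinct multiplicity vectors), and $A \subsetneq B$ forces $A <_{L_\sigma} B$, so each $L_\sigma$ restricts to a linear extension of $\cF$. The goal is then to show $\{L_\sigma\}_{\sigma \in \cS}$ is a realiser. Given $C, D \in \cF$ with $D \not\subseteq C$, write the disagreement set as $U = T \sqcup T'$ with $T = \{z : \nu_z(D) > \nu_z(C)\} \ne \emptyset$ and $T' = \{z : \nu_z(C) > \nu_z(D)\} \subseteq s(C)$; then $C <_{L_\sigma} D$ precisely when the $\sigma$-maximum of $U$ lies in $T$, so I need some $\sigma \in \cS$ with that property. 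If some $t_0 \in T$ lies outside $s(C)$, I would apply suitability to the pair $(s(C), t_0)$ to get $\sigma$ placing all of $s(C) \supseteq T'$ before $t_0 \in U$, so the $\sigma$-maximum of $U$ avoids $T'$ and hence lies in $T$. If instead $T \subseteq s(C)$, then $U \subseteq s(C) \in s(\cF)$, so $U \setminus \{t\} \in s(\cF)$ for any $t \in T$, and applying suitability to $(U \setminus \{t\}, t)$ makes $t$ the $\sigma$-maximum of $U$.

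For $\dim \cF \ge M(s(\cF))$, I would take a minimum-size realiser $\cL$ of $\cF$; each $L \in \cL$ orders the singletons $\{\{x\} : x \in X\} \subseteq \cF$ and so defines a permutation $\sigma_L$ of $X$ by $\sigma_L(x) < \sigma_L(y) \iff \{x\} <_L \{y\}$. To see $\{\sigma_L\}_{L \in \cL}$ is $s(\cF)$-suitable, take $A \in s(\cF) \subseteq \cF$ and $x \notin A$; then $\{x\} \not\subseteq A$, so the realiser provides $L \in \cL$ with $A <_L \{x\}$, and since $\{y\} \le_\cF A$ for every $y \in A$ we get $\{y\} <_L \{x\}$, i.e.\ $\sigma_L(y) \le \sigma_L(x)$ for all $y \in A$, which is exactly the suitability requirement for $(A,x)$.

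I expect the first inequality to be the delicate part, and specifically the case distinction in its proof. The obstacle is that, in the presence of multiplicities, the disagreement set $U$ need not be contained in any single member of $s(\cF)$: its part $T'$ is drawn from $s(C)$ and its part $T$ from $s(D)$, while $s(C) \cup s(D)$ need not be the support of an element of $\cF$, so one cannot feed $U$ (minus a point) directly into the suitability condition. The point that makes it work is that forcing $C$ below $D$ in $L_\sigma$ only requires keeping $T'$ out of the $\sigma$-top of $U$, and $T'$ already lies inside the single set $s(C) \in s(\cF)$; the only case where no witness from $T$ lies outside $s(C)$ is exactly the case where $U$ itself lies inside $s(C)$, which is then immediate.
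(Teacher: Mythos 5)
Your proof is correct and follows essentially the same route as the paper's: restricting a realiser to the singletons for the lower bound on $\dim\cF$, and using $\sigma$-colexicographic orders built from an $s(\cF)$-suitable family for the upper bound. The only cosmetic difference is your case split on whether $T \subseteq s(C)$; the paper avoids it by applying suitability to the single pair $\p{s(C)\setminus\set{t}, t}$ for any $t \in T$, which handles both cases at once.
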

\begin{proof}
First of all, notice that we may assume that $X^{(1)} \subseteq \cF$, as otherwise, we could remove elements from $X$ until that is the case without changing either of the two quantities involved.
Let $\cL$ be a realiser for $\cF$ of size $\dim \cF$.
For each $L \in \cL$, let $\sigma_L$ be the restriction of $L$ to $X^{(1)}$, seen as a permutation on $X$.
Set $\cS \defined \set{\sigma_L \st L \in \cL}$.
We claim that $\cS$ is $s(\cF)$-suitable, which then implies that
\begin{equation*}
    M(s(\cF)) \leq \card{\cS}=\card{\cL}= \dim \cF.
\end{equation*}
By the definition of realiser, for each $A \in \cF$ and $x \notin A$, there is $L \in \cL$ with $A \leq_L \set{x}$.
But if $x \notin A$, then $x \notin s(A)$.
Hence, by transitivity, we have for each $y \in s(A)$ that $\set{y} \leq_{L} \set{x}$, and thus $\sigma_{L}(y) \leq \sigma_{L}(x)$.
This shows that $\cS$ is indeed $s(\cF)$-suitable.

Now let $\cS$ be a $s(\cF)$-suitable family of permutations of $X$ of size $M(s(\cF))$.
Given $\sigma \in \cS$, we define $L_\sigma$ as the colexicographic order on $\cF$ with respect to $\sigma$.
That is, for $A,A' \in \cF$, if $x \in X$ is the $\sigma$-greatest element with $\nu_x(A) \neq \nu_x(A')$, then we set $A <_{L_\sigma} A'$ if $\nu_x(A) < \nu_x(A')$, and $A' <_{L_\sigma} A$ otherwise.
We claim that $\cL \defined \set{L_\sigma \st \sigma \in \cS}$ is a realiser for $\cF$, which then implies that
\begin{equation*}
    \dim \cF \leq M(s(\cF)).
\end{equation*}
Indeed, for each $\sigma \in \cS$, $L_\sigma$ is a linear extension of $\cF$, as $A \subseteq A'$ implies $A \leq_{L_\sigma} A'$ and there are no incomparables in $L_\sigma$.
Moreover, if $A, A' \in \cF$ are incomparable, then there exists $x \in X$ with $\nu_x(A) > \nu_x(A')$.
Let $A'' = s(A') \setminus \set{x}$, and notice that $A'' \in s(\cF)$ as $\cF$ is a downset.
By definition of $s(\cF)$-suitable, there is some $\sigma \in \cS$ with $\sigma(y) \leq \sigma(x)$ for all $y \in A''$.
Therefore, the $\sigma$-greatest element $z \in X$ with $\nu_z(A) \neq \nu_z(A')$ is either $x$, in which case, $\nu_z(A) > \nu_z(A')$, or it is some other element $z \notin s(A') \subseteq s(A'') \union \set{x}$, in which case we also have $\nu_z(A) > \nu_z(A') = 0$.
In any case, we have $A' \leq_{L_\sigma} A$, where $A, A' \in \cF$ were arbitrary incomparables.
Thus $\cL$ is indeed a realiser for $\cF$.
\end{proof}

The next observation is that to compute the dimension of a downset $\cF \subseteq \cM_X$, it is enough to look at $s(\cF)$.

\begin{corollary}
\label{cor:support}
If $\cF \subseteq \cM_X$ is a downset, then
\begin{equation*}
    \dim \cF = \dim s(\cF).
\end{equation*}
\end{corollary}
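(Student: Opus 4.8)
The plan is to deduce this directly from \Cref{prop:downset}, applied twice. First I would observe that $s(\cF) \subseteq \cQ_X \subseteq \cM_X$ is itself a downset in $\cM_X$: indeed, if $B \in s(\cF)$ and $A \leq_{\cM_X} B$, then since $B$ is a set (every multiplicity at most $1$), $A$ is also a set with $A \subseteq B$, so $A$ is a subset of some member of $\cF$; because $\cF$ is a downset and $A$ (as a multiset) is contained in that member, $A \in \cF$, and since $A = s(A)$ we get $A \in s(\cF)$. Thus \Cref{prop:downset} applies to the downset $s(\cF)$ as well.

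Next I would apply \Cref{prop:downset} to each of $\cF$ and $s(\cF)$ to obtain
\begin{equation*}
  \dim \cF = M(s(\cF))
  \qquad\text{and}\qquad
  \dim s(\cF) = M\bigl(s(s(\cF))\bigr).
\end{equation*}
Finally I would invoke the identity $s(s(\cF)) = s(\cF)$, already noted in the text, to conclude that the two right-hand sides coincide, hence $\dim \cF = \dim s(\cF)$.

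I do not anticipate any real obstacle here; the only point requiring a line of justification is that $s(\cF)$ is a downset in $\cM_X$ (not merely in $\cQ_X$), which is what licenses the second application of \Cref{prop:downset}. Everything else is bookkeeping with the support operator and its idempotence.
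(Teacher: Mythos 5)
Your proposal is correct and matches the paper's own proof: both apply \Cref{prop:downset} to $\cF$ and to $s(\cF)$ and conclude via the idempotence $s(s(\cF)) = s(\cF)$. The only difference is that you explicitly verify that $s(\cF)$ is a downset of $\cM_X$ (which is indeed needed to apply the proposition a second time), a point the paper leaves implicit.
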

\begin{proof}
If we consider the family $s(\cF)$ as a multiset in $\cM_X$, we get $s(s(\cF)) = s(\cF)$.
Therefore, we may apply \Cref{prop:downset} to see that
\begin{equation*}
    \dim s(\cF) = M(s(s(\cF))) = M(s(\cF)) = M(s(\cF)) = \dim \cF. \qedhere
\end{equation*}
\end{proof}

Our last remark about multisets allows us to partition the ground set $X$ when providing an upper bound on the dimension of a downset.

\begin{lemma}
\label{lem:decomposition}
Let $\cF \subseteq \cM_X$ be a downset and $X_1, \dotsc, X_k$ be a partition of $X$.
For each $Y \subseteq X$, write $\cF_Y \defined \set{A \in \cF \st s(A) \subseteq Y}$.
Then
\begin{equation*}
    \cF \embeds \cF_{X_1} \times \dotsb \times \cF_{X_k}.
\end{equation*}
\end{lemma}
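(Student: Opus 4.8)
The plan is to construct the embedding $\varphi \from \cF \to \cF_{X_1} \times \dotsb \times \cF_{X_k}$ explicitly by restricting multisets to each block of the partition. For a multiset $A \in \cM_X$ and a subset $Y \subseteq X$, let $A|_Y$ denote the multiset with $\nu_x(A|_Y) = \nu_x(A)$ for $x \in Y$ and $\nu_x(A|_Y) = 0$ otherwise. The first thing to check is that $A|_{X_i} \in \cF_{X_i}$: indeed, $A|_{X_i} \subseteq A$ (containment with multiplicity), so since $\cF$ is a downset we get $A|_{X_i} \in \cF$, and by construction $s(A|_{X_i}) \subseteq X_i$, so $A|_{X_i} \in \cF_{X_i}$ by definition. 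Thus $\varphi(A) \defined (A|_{X_1}, \dotsc, A|_{X_k})$ is a well-defined map into the product poset.

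Next I would verify that $\varphi$ is a poset embedding, i.e. that $A \leq_{\cM_X} B$ if and only if $A|_{X_i} \leq_{\cM_{X_i}} B|_{X_i}$ for every $i \in [k]$. This is essentially immediate from the fact that $X_1, \dotsc, X_k$ partition $X$: the relation $A \subseteq B$ means $\nu_x(A) \leq \nu_x(B)$ for all $x \in X$, and since every $x \in X$ lies in exactly one block $X_i$, this is equivalent to $\nu_x(A|_{X_i}) \leq \nu_x(B|_{X_i})$ for all $i$ and all $x \in X_i$, which is exactly the conjunction of the relations $A|_{X_i} \subseteq B|_{X_i}$. The product order on $\cF_{X_1} \times \dotsb \times \cF_{X_k}$ is defined coordinatewise, so this conjunction is precisely $\varphi(A) \leq \varphi(B)$ in the product. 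Hence $\varphi$ is an embedding, and $\cF \embeds \cF_{X_1} \times \dotsb \times \cF_{X_k}$ as claimed.

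There is no serious obstacle here; the statement is a soft structural fact and the only thing to be careful about is bookkeeping with multiplicities and making sure the downset hypothesis is invoked at the one place it is needed, namely to guarantee $A|_{X_i} \in \cF$. (Without down-closedness the restrictions $A|_{X_i}$ need not lie in $\cF$ at all, so the target poset would not even contain them.) One could phrase the whole argument more slickly by noting that $\cM_X \cong \cM_{X_1} \times \dotsb \times \cM_{X_k}$ canonically, and that under this isomorphism $\cF$ maps into the subposet $\cF_{X_1} \times \dotsb \times \cF_{X_k}$; I would probably present it in the concrete restriction form above for clarity, but mention the isomorphism as the conceptual reason it works.
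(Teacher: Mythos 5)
Your proof is correct and follows essentially the same route as the paper: decompose each $A \in \cF$ into its restrictions to the blocks $X_i$, use the downset hypothesis to place each restriction in $\cF_{X_i}$, and verify the embedding coordinatewise. No issues.
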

\begin{proof}
Since $X_1, \dotsc, X_k$ is a partition of $X$, any set $A \in \cF$ can be written uniquely as $A = A_1 \union \dotsb \union A_k$, with $s(A_i) \subseteq X_i$.
Since $\cF$ is a downset, so is $\cF_{X_i}$ and thus $A_i \in \cF_{X_i}$.
Thus, the mapping $A  \mapsto (A_1, \dotsc, A_k)$ is well defined
and we claim that this is the poset embedding we need.
Indeed, if $A = A_1 \union \dotsb \union A_k$ and $B = B_1 \union \dotsb \union B_k$,
with $A_i,B_i \in \cM_{X_i}$,
then $A \subseteq B$ if and only if $A_i \subseteq B_i$ for all $i$.
\end{proof}

In \Cref{sec:divisibility} we specialise these results to the divisibility order.


\section{Dimension of the divisibility order}
\label{sec:divisibility}

The divisibility order on the whole positive integers, namely $\cD_\NN$, is identical to $\cM_\NN$ via the map that sends the positive integer $a$ to the multiset of its prime divisors, denoted here by $\cP(a)$, with multiplicity given by its prime factorisation.
That is, testing whether $a \divides b$ in $\cD_\NN$ is the same as testing whether $\cP(a) \subseteq \cP(b)$ in $\cM_\NN$.
Under the bijection $\cD_\NN \leftrightarrow \cM_\NN$, the support $s\p[\big]{\cP(a)}$ corresponds to the \indef{squarefree} part of $a$, namely, the product of the prime divisors of $a$ without multiplicity.
We can then simply write $s(a) = \prod_{p \in s(\cP(a))} p$ for the squarefree part of $a$.
For $A \subseteq \NN$, write $s(A) = \set{ s(a) \st a \in A}$.

We only use standard estimates for primes, like $\pi(x) = \p[\big]{1 + o(1)}x/\log x$ as $x \to \infty$, where $\pi(x)$ is the number of primes up to $x$, and $p_n = \p[\big]{1 + o(1)} n \log n$ as $n \to \infty$, where $p_n$ is the $n$-th prime.

Our main proof is similar in structure to that of \Cref{thm:lewissouza} in~\cite{Lewis2021-em}.
Indeed, the strategy consists of finding adequate posets $Q$ and $Q'$ such that $Q \embeds \cD_{[n]} \embeds Q'$, and hence, $\dim Q \leq \dim \cD_{[n]} \leq \dim Q'$.

We illustrate this by reproving the lower bound in \Cref{thm:lewissouza}.
Note that if $p_m^k \leq n$ then the map $S \mapsto \prod_{t \in S} p_t$ is a poset embedding $\cQ_{[m]}^{1,k} \embeds \cD_{[n]}$.
Let $\eps > 0$, and take $m = (1/16-\eps)\p[\big]{(\log n) / \loglog n}^2$ and $k = 2\sqrt{m}$.
It indeed follows that $p_m^k \leq n$ for large enough $n$.
Using Dushnik's lower bound \eqref{eq:dushnik}, we have 
\begin{equation*}
    \dim \cD_{[n]} \geq \dim \cQ_{[m]}^{1,2\sqrt{m}}
    \geq m - 2\sqrt{m} = \p[\big]{1/16 - \eps - o(1)} \frac{(\log n)^2}{(\loglog n)^2}.
\end{equation*}

The upper bound in \Cref{thm:lewissouza} is obtained by embedding $\cD_{[n]}$ in the product of multiple suborders of multisets, each one covering an interval of primes.
While this is also our strategy, our embedding will be more efficient than the previous one.

Given $A, X \subseteq \NN$, let $\cD_{A,X} = \p[\big]{\set{a \in A \st s(\cP(a)) \subseteq X}, \divides}$, that is, the divisibility order on the integers in $A$ with all prime factors in $X$.
Translating \Cref{lem:decomposition} to the divisibility order, we obtain the following.

\begin{corollary}[Lemma 4.1 in~\cite{Lewis2021-em}]
\label{cor:decomposition}
Let $\cP_1, \dotsc, \cP_k$ be a partition of $[n]$.
Then
\begin{equation*}
    \cD_{[n]} \embeds \cD_{[n],\cP_1} \times \dotsb \times \cD_{[n],\cP_k}.
\end{equation*}
\end{corollary}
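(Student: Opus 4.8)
The statement to prove is Corollary~\ref{cor:decomposition}, which asserts that for any partition $\cP_1, \dotsc, \cP_k$ of $[n]$ into sets of primes (really, of positive integers — but only the primes among them matter), the divisibility order $\cD_{[n]}$ embeds into the product $\cD_{[n],\cP_1} \times \dotsb \times \cD_{[n],\cP_k}$. The natural plan is to simply invoke Lemma~\ref{lem:decomposition} through the correspondence between divisibility and multiset containment that was set up at the start of Section~\ref{sec:divisibility}: the map $a \mapsto \cP(a)$ identifies $\cD_{\NN}$ with $\cM_{\NN}$, and under this identification $\cD_{[n]}$ corresponds to the downset $\cF \defined \set{\cP(a) \st a \in [n]} \subseteq \cM_{\NN}$ — it is down-closed because any divisor of an element of $[n]$ is again in $[n]$.

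First I would make the reduction to primes explicit: since $\cP(a)$ is a multiset supported on primes, only the primes appearing in the $\cP_i$ are relevant, so we may assume without loss of generality that each $\cP_i$ consists of primes and that $\cP_1 \cup \dotsb \cup \cP_k$ is a partition of the set of all primes (primes larger than $n$ contribute nothing since they divide no element of $[n]$, so they can be thrown into $\cP_1$ arbitrarily). Then apply Lemma~\ref{lem:decomposition} with ground set $X = \set{\text{primes}}$, downset $\cF$, and partition $X_i = \cP_i$. That lemma gives $\cF \embeds \cF_{\cP_1} \times \dotsb \times \cF_{\cP_k}$. It remains to identify $\cF_{\cP_i}$, under the multiset--divisibility correspondence, with $\cD_{[n],\cP_i}$: by definition $\cF_{\cP_i} = \set{\cP(a) \st a \in [n],\ s(\cP(a)) \subseteq \cP_i} = \set{\cP(a) \st a \in [n],\ \cP(a) \subseteq \cP_i}$, which is exactly the multiset picture of $\cD_{[n],\cP_i} = (\set{a \in [n] \st \cP(a) \subseteq \cP_i}, \divides)$. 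Composing the embedding from Lemma~\ref{lem:decomposition} with these identifications yields the claimed embedding.

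I do not expect any real obstacle here — the content is entirely in Lemma~\ref{lem:decomposition}, and this corollary is just its translation into number-theoretic language. The one point that needs a word of care is checking that the unique decomposition $A = A_1 \cup \dotsb \cup A_k$ used in the proof of Lemma~\ref{lem:decomposition} corresponds, on the integer side, to the factorisation $a = \prod_i a_i$ where $a_i$ is the part of $a$ built from primes in $\cP_i$; this is immediate from unique factorisation, and $a \divides b$ iff $a_i \divides b_i$ for all $i$ matches $A_i \subseteq B_i$ for all $i$. So the proof is essentially one sentence of setup plus the citation of Lemma~\ref{lem:decomposition}.
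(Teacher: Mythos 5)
Your proposal is correct and matches the paper's approach exactly: the paper gives no separate proof, stating only that the corollary is obtained by ``translating'' Lemma~\ref{lem:decomposition} to the divisibility order via the identification of $\cD_{\NN}$ with $\cM_{\NN}$, which is precisely the reduction you carry out. Your extra care about restricting the partition of $[n]$ to its primes is a reasonable (and correct) tidying of a detail the paper glosses over.
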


Therefore, to upper bound $\dim \cD_{[n]}$, we can partition the primes in $[n]$ adequately and bound the contribution of each set of primes individually.

\begin{corollary}
\label{cor:squarefree}
The dimension of $\cD_{[n],X}$, the divisibility order on the integers in $[n]$ with prime factors in $X$, is the same as the dimension of $\cD_{s([n]),X}$, the divisibility order on the squarefree integers in $[n]$ with prime factors in $X$.
Indeed,
\begin{equation*}
    \dim \cD_{[n],X} = \dim \cD_{s([n]),X} = M(n,X).
\end{equation*}
\end{corollary}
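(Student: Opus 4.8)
The idea is to realise $\cD_{[n],X}$ as a downset of a multiset poset and then apply \Cref{prop:downset} and \Cref{cor:support}. Under the identification of $\cD_\NN$ with $\cM_\NN$ that sends an integer $a$ to the multiset $\cP(a)$ of its prime divisors, the poset $\cD_{[n],X}$ becomes
\begin{equation*}
  \cF \defined \set{\cP(a) \st a \in [n],\ \cP(a) \subseteq X} \subseteq \cM_X .
\end{equation*}
First I would verify that $\cF$ is a downset: any sub-multiset $B \subseteq \cP(a)$ is of the form $\cP(d)$ for the divisor $d \defined \prod_{p} p^{\nu_p(B)}$ of $a$, and then $d \leq a \leq n$ and $\cP(d) \subseteq \cP(a) \subseteq X$, so $B \in \cF$. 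As in the proof of \Cref{prop:downset}, we may shrink the ground set so that every singleton belongs to $\cF$; concretely this discards the elements of $X$ that are not primes, and the primes of $X$ exceeding $n$, none of which affect any of the three quantities in the statement.

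Next, for the first equality, \Cref{cor:support} yields $\dim \cF = \dim s(\cF)$, so it suffices to identify $s(\cF)$ with $\cD_{s([n]),X}$ under the same dictionary. For $a \in [n]$ the multiset $s(\cP(a))$ is the \emph{set} of primes dividing $a$, i.e.\ $\cP(a_0)$ where $a_0 \defined \prod_{p \divides a} p$ is the squarefree part of $a$; since $a_0 \divides a$ we get $a_0 \leq n$, and $\cP(a_0) \subseteq X$ holds exactly when $\cP(a) \subseteq X$. Conversely a squarefree $m \leq n$ with $\cP(m) \subseteq X$ has $\cP(m) = s(\cP(m))$, so it already lies in $s(\cF)$. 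Hence $s(\cF) = \set{\cP(m) \st m \in [n] \text{ squarefree},\ \cP(m) \subseteq X}$, which is the image of $\cD_{s([n]),X}$ under the identification (here one uses that $s([n])$ is exactly the set of squarefree integers in $[n]$, since the squarefree part of $a$ divides $a$). Combining, $\dim \cD_{[n],X} = \dim \cF = \dim s(\cF) = \dim \cD_{s([n]),X}$.

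For the second equality, \Cref{prop:downset} gives $\dim \cF = M(s(\cF))$, and what remains is the bookkeeping identity $M(s(\cF)) = M(n,X)$. Unwinding $\cA$-suitability with $\cA = s(\cF)$: a set $\cS$ of permutations of the primes in $X$ is $s(\cF)$-suitable exactly when, for every $\cP(m) \in s(\cF)$ — that is, every squarefree $m \in [n]$ with $\cP(m) \subseteq X$ — and every prime $p \in X$ with $p \notin \cP(m)$ — that is, $p \ndivides m$ — there is $\sigma \in \cS$ with $\sigma(q) \leq \sigma(p)$ for every $q \in \cP(m)$, i.e.\ for every prime $q \divides m$. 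This is verbatim the defining property of $M(n,X)$, noting that the constraint is non-vacuous only when all prime factors of $m$ lie in $X$, since $\sigma$ permutes the primes of $X$. Thus $M(s(\cF)) = M(n,X)$, and the proof is complete.

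Because the statement is just \Cref{prop:downset} and \Cref{cor:support} transported along the divisibility–multiset correspondence, I do not expect a substantive obstacle; the only delicate points are the bookkeeping in the last two paragraphs — matching the quantifier over squarefree $m \in [n]$ in the definition of $M(n,X)$ with the quantifier over the elements of $s(\cF)$ — and making sure the ground set of $\cM_X$ is taken to be the primes of $X$ that genuinely occur, which the trimming step from the proof of \Cref{prop:downset} arranges for free.
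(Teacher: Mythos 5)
Your proof is correct and follows the same route as the paper: the paper's own proof simply observes that $\cD_{[n],X}$ is a downset of $\cM_X$ under the prime-factorisation correspondence and cites \Cref{cor:support} and \Cref{prop:downset}. You supply the same argument with the bookkeeping (downset verification, identification of $s(\cF)$ with $\cD_{s([n]),X}$, and the unwinding of $M(s(\cF)) = M(n,X)$) made explicit, all of which checks out.
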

\begin{proof}
$\cD_{[n],X}$ is a downset in $\cD_{\NN,X}$ identified with $\cM_X$.
The result follows from \Cref{cor:support} and \Cref{prop:downset}.
\end{proof}

The following bound for primes in the interval $(a,b]$ was implicit in~\cite{Lewis2021-em}:
\begin{equation}
\label{eq:implicit-bound}
  \dim \cD_{[n],(a,b]} = O\p[\bigg]{ \frac{(\log n)^2 \log b}{(\log a)^2} }.
\end{equation}
One key step in our proof is to get a better estimate than the one above.
The bound~\eqref{eq:implicit-bound} was obtained in~\cite{Lewis2021-em} by first embedding $\cD_{[n],(a,b]}$ into a poset of multisets ordered by inclusion with multiplicity, and then giving a bound on the dimension of that poset using the probabilistic method.
Equivalently, we can observe that if $x \in \cD_{[n],(a,b]}$, then $x$ cannot be the product of more than $(\log n) / (\log a)$ primes.
Therefore, the map $a \mapsto \cP(a)$ is a poset embedding from $\cD_{s([n]),(a,b]}$ into $\cQ_{[d]}^{[0,k]}$, where $k = (\log n)/(\log a)$ and $d = \pi(b) - \pi(a)$.
Füredi and Kahn's bound \eqref{eq:furedi-kahn} then gives \eqref{eq:implicit-bound}.

By using our refinement \Cref{thm:kahn-furedi-refined} instead, we obtain the following result.

\begin{proposition}
\label{prop:prime-interval}
The dimension of $\cD_{[n], (a,b]}$, the divisibility order on the integers in $[n]$ with prime factors in $(a,b]$, satisfies
\begin{align}
\label{eq:prime-interval}
    \dim \cD_{[n],(a,b]}
    \leq \p[\Big]{1 + \floor[\Big]{\frac{\log n}{\log a}}}\ceil[\big]{\log \pi(b) + \log n}.
\end{align}
In particular, if $a \leq n^{o(1)}$ as $n \to \infty$, then
\begin{align*}
    \dim \cD_{[n],(a,n]}
    \leq \p[\big]{2 + o(1)} \frac{(\log n)^2}{\log a}.
\end{align*}
\end{proposition}
\begin{proof}
Note we may assume $b \leq n$.
By \Cref{cor:squarefree}, we have $\dim \cD_{[n],(a,b]} = \dim \cD_{s([n]),(a,b]}$.
Denote by $X$ the set of primes in $(a,b]$.
Recall that the map $x \mapsto \cP(x)$ is a poset embedding from $\cD_{s([n]),(a,b]}$ into $\cQ_{X}$.
Moreover, if $x \in \cD_{s([n]),(a,b]}$, then $x$ is the product of at most $(\log n) / (\log a)$ primes.
Therefore, $x \mapsto \cP(x)$ embeds into $\cQ_{[d]}^{1,k}$ where $k = \floor{(\log n) / (\log a)}$ and $d = \pi(b) - \pi(a) \leq \pi(b)$.
Applying \Cref{thm:kahn-furedi-refined}, we obtain precisely \eqref{eq:prime-interval} as claimed.
\end{proof}

This bound will be one of the ingredients in the proof of \Cref{thm:main} in \Cref{sec:main}.
Indeed, we will use it with $a = \exp\p[\bib]{(\loglog n)^2}$, which gives
\begin{align*}
    \dim \cD_{[n],(a,n]}
    \leq \p[\big]{2 + o(1)} \p[\Big]{\frac{\log n}{\loglog n}}^2.
\end{align*}
With a slightly different choice of $a$, we can already improve the constant in \Cref{thm:lewissouza}.

\begin{corollary}
\label{cor:improved}
Let $\cD_{[n]}$ be the divisibility order on $[n]$.
Then, as $n \to \infty$,
\begin{equation*}
    \dim \cD_{[n]}
        \leq \p[\big]{1 + o(1)} \frac{(\log n)^2}{\loglog n}.
\end{equation*}
\end{corollary}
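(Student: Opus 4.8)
The plan is to split the primes in $[n]$ into a short prefix consisting of all primes below a threshold $P_0$, together with a geometrically growing family of intervals covering $(P_0, n]$, and then to combine \Cref{cor:decomposition} with the subadditivity of dimension to bound each piece separately. The only real design decision is the choice of $P_0$.

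Concretely, I would take $P_0 = (\log n)^2 / \loglog n$, so that $\log P_0 = 2\loglog n - \logloglog n = (2+o(1))\loglog n$, while by the prime number theorem $\pi(P_0) = (1+o(1)) P_0/\log P_0 = O\p[\big]{(\log n)^2/(\loglog n)^2} = o\p[\big]{(\log n)^2/\loglog n}$. Let $\cP_0$ be the set of primes at most $P_0$. Applying \Cref{cor:decomposition} to the partition of $\cP_0$ into singletons, $\cD_{[n],\cP_0}$ embeds into a product of $\pi(P_0)$ chains (the powers of each prime $p\in\cP_0$ lying in $[n]$), so $\dim \cD_{[n],\cP_0} \le \pi(P_0) = o\p[\big]{(\log n)^2/\loglog n}$; this piece is negligible.

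Next I would cover $(P_0,n]$ by intervals $(a_{i-1},a_i]$ with $a_0 = P_0$ and $\log a_i \ge K \log a_{i-1}$ for a slowly growing $K = K(n) \to \infty$ (for instance $K = \loglog n$), stopping once $a_i \ge n$ and capping the last endpoint at $n$; let $\cP_i$ be the primes in $(a_{i-1},a_i]$. By \Cref{prop:prime-interval}, $\dim \cD_{[n],\cP_i} \le 2(\log n)^2/\log a_{i-1}$, and since $\log a_i \ge K^i \log P_0$ the geometric series gives
\[
  \sum_{i \ge 1} \dim \cD_{[n],\cP_i} \le 2(\log n)^2 \sum_{i\ge 0} \frac{1}{\log a_i} \le \frac{2(\log n)^2}{\log P_0}\cdot\frac{K}{K-1} = \p[\big]{1+o(1)}\frac{2(\log n)^2}{\log P_0} = \p[\big]{1+o(1)}\frac{(\log n)^2}{\loglog n},
\]
using $\log P_0 = (2+o(1))\loglog n$ and $K/(K-1) = 1+o(1)$. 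Adding the contribution of $\cP_0$ and using subadditivity of dimension across the product in \Cref{cor:decomposition} yields $\dim \cD_{[n]} \le (1+o(1))(\log n)^2/\loglog n$.

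There is no serious obstacle once \Cref{prop:prime-interval} is available: the argument is pure bookkeeping. The one point requiring care is the two competing demands on $P_0$: it must be large enough that $\log P_0 \sim 2\loglog n$, so that the dominant (first) interval contributes $\sim (\log n)^2/\loglog n$ and not a larger constant multiple of it, yet small enough that $\pi(P_0) = o\p[\big]{(\log n)^2/\loglog n}$ so the small primes are genuinely negligible; the window $P_0 = (\log n)^{2-o(1)}$ is exactly what reconciles these. One should also note that the bound of \Cref{prop:prime-interval} is already an upper bound for the integer $\dim \cD_{[n],\cP_i}$, so summing the real-valued bounds directly is legitimate and no rounding correction is needed.
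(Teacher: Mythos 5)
Your proposal is correct and follows essentially the same route as the paper: the same threshold $(\log n)^2/\loglog n$, the small primes handled by embedding into $\pi(P_0)$ chains, and \Cref{prop:prime-interval} for the remaining primes. The only difference is that your geometric subdivision of $(P_0,n]$ is unnecessary --- the bound $2(\log n)^2/\log a$ in \Cref{prop:prime-interval} depends only on the left endpoint $a$, so the paper applies it once to the entire interval $(A(n),n]$ and obtains $(1+o(1))(\log n)^2/\loglog n$ directly, whereas your version merely reproves this with an extra $K/(K-1)=1+o(1)$ loss absorbed into the error term.
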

\begin{proof}
Let $A(n) = (\log n)^2 (\loglog n)^{-1}$.
By \Cref{cor:decomposition}, we have
\begin{align*}
    \dim \cD_{[n]}
        \leq \dim \cD_{[n],[1,A(n)]} + \cD_{[n], (A(n),n]}.
\end{align*}
But $\cD_{[n],[1,A(n)]}$ can be embedded into the product of $\pi(A(n))$ chains, so
\begin{align*}
    \cD_{[n],[1,A(n)]} \leq \pi(A(n))
    \leq \p[\big]{1 + o(1)} \frac{A(n)}{\log A(n)}
    = \p[\big]{1/2 + o(1)} \p[\Big]{\frac{\log n}{\loglog n}}^2.
\end{align*}
Finally, by \Cref{prop:prime-interval},
\begin{equation*}
\cD_{[n], (A(n),n]}
    \leq \p[\big]{2 + o(1)} \frac{ (\log n)^2}{\log A(n)}
    = \p[\big]{1 + o(1)} \frac{(\log n)^2}{\loglog n},
\end{equation*}
and we are done.
\end{proof}

To improve upon \Cref{cor:improved}, we will use the bound from \Cref{cor:dimension-cover-cube}, coming from $r$-cover-free families.

\begin{lemma}
\label{lem:dimension-boost}
Let $0 < \eps < 1/4$, let $d = d(n)$ be integer sequence with $\loglog d \leq \eps \loglog n$ and with $d \to \infty$ as $n \to \infty$. 
Then for large enough $n$, we have
\begin{equation*}
    \dim \cD_{[n],(d^{2^{k-1}}, d^{2^{k}}]} \leq \p[\big]{4 + 64\eps}\p[\Big]{\frac{\log n}{\loglog n}}^2,
\end{equation*}
for every integer $k$ with $1 \leq k \leq \eps \loglog n$.
\end{lemma}
\begin{proof}
Let $a = d^{2^{k-1}}$, $b = d^{2^{k}}$, $r = \floor{(\log n) / (\log a)}$, and $m = \pi(b) - \pi(a)$.
Recall from \Cref{cor:squarefree} that $\dim \cD_{[n],(a,b]} = \dim \cD_{s([n]),(a,b]}$.
Since $x \mapsto \cP(x)$ is a poset embedding of $\cD_{s([n]),(a,b]}$ into $\cQ_{[m]}^{1,r}$, we have $\dim \cD_{[n],(a,b]} \leq \dim \cQ_{[m]}^{1,r}$.
As $d \to \infty$ as $n \to \infty$, so does $m \to \infty$.
Therefore, \Cref{cor:dimension-cover-cube} gives us
\begin{align}
\label{eq:dimension-boost}
    \dim \cQ_{[m]}^{1,r}
    \leq (1+\eps)\p[\Big]{\frac{r\log m}{\log r}}^2
    \leq (1+\eps)\p[\Big]{\frac{\log n \log b}{\log a(\loglog n - \loglog a)}}^2,
\end{align}
for large enough $n$.

By the definition of $a$ and $b$, we have $\log b= 2 \log a$ and $\log a = 2^{k-1} \log d$. 
Moreover, since $k \leq \eps \loglog n$, we have
\begin{align*}
    \loglog a\leq \loglog d + k \log 2 \leq 2 \eps \loglog n ,
\end{align*}
for sufficiently large $n$.
Inserting this into \eqref{eq:dimension-boost}, we obtain for sufficiently large $n$ that
\begin{align*}
    \dim \cQ_{[m]}^{1,r}
    \leq \frac{4(1 + \eps)}{(1 - 2\eps)^2}\p[\Big]{\frac{\log n}{\loglog n}}^2
    \leq (4 + 64\eps)\p[\Big]{\frac{\log n}{\loglog n}}^2,
\end{align*}
where the last inequality holds as $0 < \eps < 1/4$, completing the proof.
\end{proof}

To improve the upper bound on $\dim \cD_{[n]}$, we will make use of \Cref{lem:dimension-boost} to bound the contribution of primes $\leq n$ in the range from $(\log n)^2$ to $(\log n)^{\loglog n}$ roughly.
In this interval, we have only need to apply \Cref{lem:dimension-boost} with $k = O(\logloglog n)$.


\section{Improving the upper bound}
\label{sec:main}

We can now finally proceed to prove the main result, namely, to provide a bound for $\dim \cD_{[n]}$ from above by $C (\log n)^2 (\loglog n)^{-2} \logloglog n$.

\begin{proof}[Proof of \Cref{thm:main}]
Fix an $0 < \eps < 1/4$.
Define $d(n) \defined \floor{(2 + \eps) (\log n) (\loglog n)^{-1}}$ and
\begin{equation*}
    K(n) \defined \ceil[\big]{(1/\log 2 + \eps) \logloglog n}.
\end{equation*}
Since $d \to \infty$ as $n \to \infty$, $\loglog d(n) = o(\loglog n)$ and $K(n) = o(\loglog n)$, we can choose $n_0 \defined n_0(\eps)$ sufficiently large so that \Cref{lem:dimension-boost} holds for all $1 \leq k \leq K(n)$, and such that for all $n \geq n_0$ we have that $2^{K(n)} \log d(n) > (\loglog n)^2$.
We now fix $n \geq n_0$.
In the following, we will write $d$ for $d(n)$ and $K$ for $K(n)$.

Let $A(n) \defined (\log n)^2 (\loglog n)^{-1}$ and $B(n) \defined \exp\p[\big]{(\loglog n)^2}$.
We decompose the primes in $[n]$ into three parts and apply \Cref{cor:decomposition} to obtain
\begin{align*}
    \dim \cD_{[n]}
        \leq \dim \cD_{[n],[1,A(n)]} + \dim \cD_{[n], (A(n),B(n)]} + \dim \cD_{[n], (B(n), n]}.
\end{align*}

But $\cD_{[n],[1,A(n)]}$ can be embedded into the product of $\pi(A(n))$ chains, so
\begin{align*}
    \dim \cD_{[n],[1,A(n)]} \leq \pi(A(n))
    \leq \p[\big]{1 + o(1)} \p[\Big] {\frac{\log n}{\loglog n}}^2.
\end{align*}

By \Cref{prop:prime-interval}, we have
\begin{equation*}
    \dim \cD_{[n],(B(n),n]}
    \leq \p[\big]{2 + o(1)}\frac{(\log n)^2}{\log B(n)}
    = \p[\big]{2 + o(1)} \p[\Big]{ \frac{\log n}{\loglog n}}^2.
\end{equation*}

To bound the contribution of the interval $(A(n), B(n)]$, we break it up further into more intervals, using \Cref{cor:decomposition} again.
We consider intervals of the form $(d^{2^{k-1}}, d^{2^k}]$ for $1 \leq k \leq K$.
Indeed, these intervals cover $(A(n), B(n)]$ as long as $d^{2^K} \geq B(n)$, which is the case as
\begin{align*}
    \log \p[\big]{d^{2^K}} = 2^K \log d > (\loglog n)^2 = \log B(n).
\end{align*}
Moreover, \Cref{lem:dimension-boost} gives
\begin{equation*}
    \dim \cD_{[n],(d^{2^{k-1}}, d^{2^{k}}]} \leq (4 + 64\eps)\p[\Big]{ \frac{\log n}{ \loglog n} }^2,
\end{equation*}
and therefore,
\begin{align*}
    \dim \cD_{[n], (A(n),B(n)]}
    &\leq K \p[\big]{4 + 64\eps} \p[\Big]{ \frac{\log n}{ \loglog n} }^2 \\
    &\leq \p[\big]{4 / \log 2 + 128\eps}  \frac{(\log n)^2 \logloglog n}{(\loglog n)^2}.
\end{align*}
Because $\eps$ can be chosen arbitrarily small, we have
\begin{equation*}
    \dim \cD_{[n]}
        \leq \p[\big]{4/\log 2 + o(1)}\,\frac{(\log n)^2 \logloglog n}{(\loglog n)^2},
\end{equation*}
as claimed.
\end{proof}


\section{Final remarks}
\label{sec:remarks}

Several variants of the notion of dimension for partial orders exists.
Most notably, the $2$-dimension of an order $P$ is the minimum $d$ such that $P$ can be embedded into the subsets of $[d]$ ordered by inclusion.
We denote it by $\dim_2 P$.
Lewis and the first author obtained bounds to $\dim_2 \cD_{[n]}$ similar to \Cref{thm:lewissouza} in~\cite{Lewis2021-em}, showing that
\begin{align*}
    \p[\big]{1/16 - o(1)} \frac{(\log n)^2}{(\loglog n)^2}
    \leq \dim_2 &\cD_{[n]}
    \leq \p[\big]{4e \pi^2 /3+ o(1)} \frac{(\log n)^2}{\loglog n}.
\end{align*}
We remark that our proof of \Cref{thm:main} does not extend to the $2$-dimension, chiefly because the reduction to square-free numbers does not work.
This of course does not imply that a similar bound couldn't hold for the $2$-dimension.
Indeed, it was conjectured in~\cite{Lewis2021-em} that $\dim \cD_{[n]}$ and $\dim_2 \cD_{[n]}$ are of the same order.
We now believe, however, that this conjecture could have been overly optimistic, and that it is very well reasonable that these two dimensions differ.

Regarding the original notion of dimension, we note that the difficulty lies in bounding the contribution of the primes in the range from $(\log n)^2$ to $(\log n)^{\loglog n}$.
In our proof, we used bounds on the dimension of $\cQ_{[n]}^{1,k}$ in the regime $\exp\p[\big]{(\log n)^{1/2}} \leq k \leq n^{1/2}$.
If the Kierstead lower bound \eqref{eq:kierstead-lower} turned out to be tight, that is, if we had
\begin{equation*}
    \dim \cQ_{[n]}^{1,k} = O\p[\bigg]{\frac{ k^2 \log n}{\log k}},
\end{equation*}
then then the proof of \Cref{lem:dimension-boost} can be adapted to show that 
\begin{equation}
\label{eq:unproven}
    \dim \cD_{[n],(d^{2^{k-1}}, d^{2^{k}}]} \leq C_k \p[\Big]{\frac{\log n}{\loglog n}}^2,
\end{equation}
with $C_k = \Theta(2^{-k})$.
Thus we could perform a more efficient covering of $\cD_{[n],(A(n), B(n)]}$ in the proof of \Cref{thm:main}, leading to $\dim \cD_{[n]} = \Theta\p[\big]{ (\log n / \loglog n)^2 }$.
In fact, a weaker bound of the form 
\begin{equation}
\label{eq:unproven-dim}
    \dim \cQ_{[n]}^{1,k} = O\p[\bigg]{k^2 \p[\Big]{\frac{\log n}{\log k}}^{2-\beta}},
\end{equation}
for some $\beta > 0$, would suffice to obtain a summable sequence $C_k$ in \eqref{eq:unproven}.
Even weaker savings would still suffice.

One way to obtain such an improvement is through the construction of more efficient $r$-cover-free families.
Indeed, if the upper bound of Erdős, Frankl and Füredi in \Cref{thm:cover-bound} is tight, that would imply that the aforementioned Kierstead lower bound \eqref{eq:kierstead-lower} on the $\dim \cQ_{[n]}^{1,k}$ is also tight.
As before, a weaker improvement on $f_r(n)$ would also suffice.
Indeed, $f_{r}(n) \geq n^{ (\floor{1 /\eps} + 1)/2}$ is improved to $f_{r}(n) \geq n^{1/ \eps^{1+\delta}}$ for $r = \eps n^{1/2}$ and some $\delta > 0$, that would also lead to \eqref{eq:unproven-dim} with $\beta > 0$, and consequently, to $\dim \cD_{[n]} = \Theta\p[\big]{ (\log n / \loglog n)^2 }$.

Conversely, if one can show that $\dim \cD_{[n]}$ grows faster than $(\log n / \loglog n)^2$, that would immediately obtain improved bounds lower bounds on $\dim \cQ_{[n]}^{1,k}$ in the regime $\exp\p[\big]{(\log n)^{1/2}} \leq k \leq n^{1/2}$ and improved upper bounds on $f_r(n)$ in the regime $r = \eps n^{1/2}$.

It is also of interest to consider the divisibility order $\cD_A$ for subsets $A \subseteq \NN$ other than $[n]$.
Lewis and Souza~\cite{Lewis2021-em}, showed that the bounds in \Cref{thm:lewissouza} also holds for $\dim \cD_{(n^\alpha, n]}$, with possibly different constants.
\Cref{thm:main} implies an improvement on the upper bound simply from the fact that $\cD_{(n^\alpha, n]}$ is a suborder of $\cD_{[n]}$.

The behaviour dramatically changes when we consider the divisibility order on intervals like $(n/\kappa, n]$, for some constant $\kappa > 1$.
Indeed, if $1 < \kappa \leq 2$, then $\cD_{(n/\kappa, n]}$ is an antichain, so the dimension is 2.
Lewis and Souza also showed that
\begin{equation*}
    \frac{c(\log \kappa)^2}{(\loglog \kappa)^2} \leq \dim \cD_{(n/\kappa, n]} \leq C \kappa (\log \kappa)^{1 + o(1)}.
\end{equation*}
The upper bound was significantly improved by Haiman~\cite{Haiman2022-mz}, who recently proved that
\begin{equation*}
    \dim \cD_{(n/\kappa, n]} \leq \frac{C (\log \kappa)^3}{(\loglog \kappa)^2}.
\end{equation*}


\section{Acknowledgements}

The authors are grateful for the helpful comments of Professor Béla Bollobás and for the invaluable suggestions of the referees.


\begin{bibdiv}
\begin{biblist}

\bib{Dushnik1950-ma}{article}{
      author={Dushnik, B.},
       title={{Concerning a certain set of arrangements}},
        date={1950},
        ISSN={0002-9939, 1088-6826},
     journal={Proceedings of the American Mathematical Society},
      volume={1},
       pages={788\ndash 796},
}

\bib{Dushnik1941-ud}{article}{
      author={Dushnik, B.},
      author={Miller, E.~W.},
       title={{Partially ordered sets}},
        date={1941},
        ISSN={0002-9327, 1080-6377},
     journal={American Journal of Mathematics},
      volume={63},
       pages={600\ndash 610},
}

\bib{Erdos1985-js}{article}{
      author={Erdős, P.},
      author={Frankl, P.},
      author={Füredi, Z.},
       title={{Families of finite sets in which no set is covered by the union
  of $r$ others}},
        date={1985},
        ISSN={0021-2172, 1565-8511},
     journal={Israel Journal of Mathematics},
      volume={51},
       pages={79\ndash 89},
}

\bib{Furedi1986-bu}{article}{
      author={Füredi, Z.},
      author={Kahn, J.},
       title={{On the dimensions of ordered sets of bounded degree}},
        date={1986},
        ISSN={0167-8094, 1572-9273},
     journal={Order},
      volume={3},
      number={1},
       pages={15\ndash 20},
}

\bib{Haiman2022-mz}{article}{
      author={Haiman, M.},
       title={{The dimension of divisibility orders and multiset posets}},
        date={2022},
      eprint={2201.12952},
}

\bib{Kierstead1996-kc}{article}{
      author={Kierstead, H.~A.},
       title={{On the order dimension of $1$-sets versus $k$-sets}},
        date={1996},
        ISSN={0097-3165},
     journal={Journal of Combinatorial Theory, Series A},
      volume={73},
      number={2},
       pages={219\ndash 228},
}

\bib{Kierstead1999-vj}{article}{
      author={Kierstead, H.~A.},
       title={{The dimension of two levels of the Boolean lattice}},
        date={1999},
        ISSN={0012-365X},
     journal={Discrete Mathematics},
      volume={201},
      number={1},
       pages={141\ndash 155},
}

\bib{Lewis2021-em}{article}{
      author={Lewis, D.},
      author={Souza, V.},
       title={{The order dimension of divisibility}},
        date={2021},
        ISSN={0097-3165},
     journal={Journal of Combinatorial Theory, Series A},
      volume={179},
       pages={105391},
}

\bib{Spencer1971-fs}{article}{
      author={Spencer, J.},
       title={{Minimal scrambling sets of simple orders}},
        date={1971},
        ISSN={1588-2632},
     journal={Acta Mathematica Academiae Scientiarum Hungaricae},
      volume={22},
      number={3},
       pages={349\ndash 353},
}

\bib{Sperner1928-hh}{article}{
      author={Sperner, E.},
       title={{Ein satz über untermengen einer endlichen menge}},
        date={1928},
        ISSN={1432-1823},
     journal={Mathematische Zeitschrift},
      volume={27},
       pages={544\ndash 548},
}

\end{biblist}
\end{bibdiv}


\end{document}